\title{{Estimates for capacity and discrepancy of convex surfaces in sieve-like domains with an application to homogenization}}
\author{Aram L. Karakhanyan \& Martin Str\"omqvist}
\renewcommand\cal{\mathcal}
\newtheorem{theorem}{Theorem}
\newtheorem{lemma}{Lemma}
\newtheorem{defn}{Definition}
\newcommand{\modulo}[1]{\quad(\text{mod }#1)}
\newcommand{\Ga}{\Gamma}
\newcommand{\e}{\varepsilon}
\newcommand{\Om}{\Omega}
\newcommand{\Z}{\mathbb{Z}}
\newcommand{\R}{\mathbb{R}}
\newcommand{\supp}{\operatorname{supp}}
\newcommand{\loc}{\operatorname{loc}}
\newcommand{\cp}{\operatorname{cap}}
\newcommand{\weak}{\rightharpoonup}
\renewcommand\L%{\scalebox{1.5}{\llcorner}}%
\begin{document}
\maketitle

\begin{abstract}
We consider the intersection of a convex surface $\Ga$ with a periodic perforation of $\R^d$, which looks like a sieve,  given by 
$T_\e = \bigcup_{k\in \Z^d}\{\e k+a_\e T\}$ where $T$ is a given compact set and $a_\e\ll \e$ is the 
size of the perforation in the $\e$-cell $(0, \e)^d\subset \R^d$. When $\e$ 
tends to zero we establish uniform estimates for $p$-capacity $1<p<d$ and discrepancy of 
distributions of intersection $\Ga\cap T_\e$. As an application one gets that the thin obstacle problem with the obstacle defined on 
the intersection of $\Ga$ and perforations, in given bounded domain, is homogenizable when $p<1+\frac d4$.
This result is new even for the classical Laplace operator.  

\medskip 
\vspace{0.2cm}

\noindent Keywords: Capacity; Uniform distributions; Convex surface; Free boundary; Homogenization; $p-$Laplacian; Perforated domains; Quasiuniform convergence; Thin obstacle.

\smallskip 
\noindent 2010 Mathematics Subject Classification: 35R35; 35B27; 32U15; 11K06.

\end{abstract}

%%%%%%%%%%%%%%%%%%%%%%%%%%%%%%%%%%%%%%%%%%%
%
%                  SECTION
%
%%%%%%%%%%%%%%%%%%%%%%%%%%%%%%%%%%%%%%%%%%%
\section{Introduction}
In this paper we study the properties of the intersection of a convex surface $\Ga$ with a periodic perforation of $\R^d$ given by 
$T_\e = \bigcup_{k\in \Z^d}\{\e k+a_\e T\}$, where $T$ is a given compact set and $a_\e$ is the 
size of the perforation in the $\e$-cell. Our primary interest is to obtain good control of $p$-capacity $1<p<d$ and discrepancy of 
distributions of the components of the intersection $\Ga\cap T_\e$ in terms of $\e$ when the size of 
perforations tends to zero. As an application of our analysis we get that the thin obstacle problem 
in periodically 
perforated domain $\Omega\subset \R^d$ with given strictly convex  and $C^2$ smooth surface as the obstacle and  $p-$Laplacian as the 
governing partial differential equation  is  
 homgenizable  provided that  $p<1+\frac d4$. Moreover, the 
limit problem 
admits a variational formulation with one extra term involving the mean capacity, see Theorem \ref{mainthm}. 
The configuration of $\Gamma$, $\Ga_\e$, $T_\e$ and $\Omega$ is illustrated in Figure 1.

This result is new even for the classical case $p=2$ corresponding to the Laplace operator.
Another novelty is contained in the proof of Theorem 2 where we use a version of the method of quasi-uniform continuity
developed in \cite{KS}. 

\newpage

%Figure
\begin{figure}
\centering
\includegraphics[width=0.7\textwidth,natwidth=610,natheight=642]{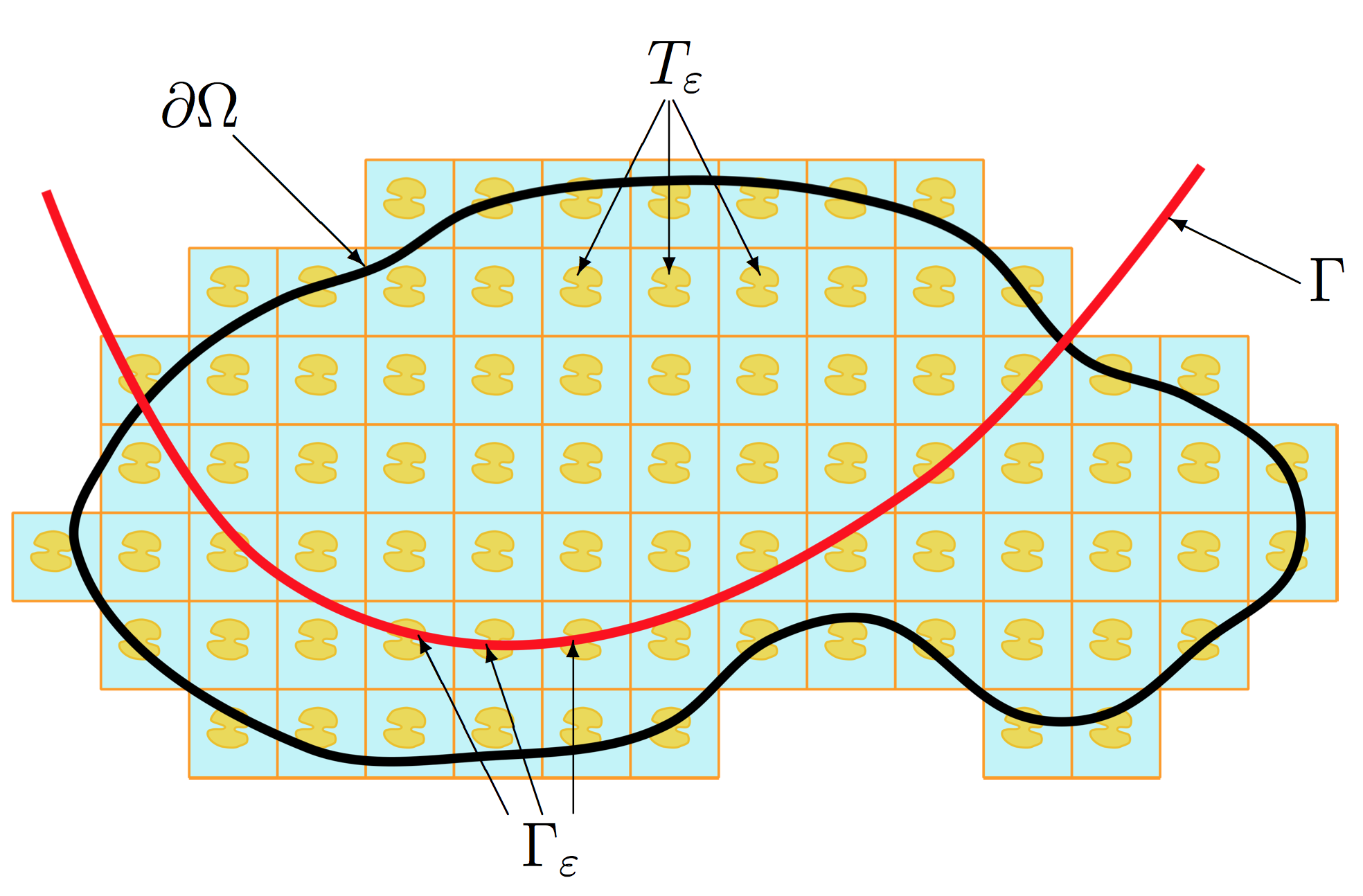}
\caption{The sieve-like configuration with convex $\Gamma$.}
\end{figure}

\subsection{Statement of the Problem} 
Let 
\[
T_\e = \bigcup_{k\in \Z^d}\{\e k+a_\e T\}, 
\] 
and let 
\[\Gamma_\e = \Gamma\cap T_\e.\]
We assume that $\Gamma$ is a strictly convex surface in $\R^d$ that locally admits the representation 
\begin{equation}\label{g}\{(x',g(x')):x'\in Q'\},\end{equation}
where $Q'\subset\R^{d-1}$ is a cube. For example, $\Gamma$ may be a compact convex surface, or may be defined globally as a graph
of a convex function. 

Without loss of generality we assume that $x_d = g(x')$ because the interchanging of coordinates preserves the structure of the periodic lattice in the definition of $T_\e$. 
We will also study homogenization of the thin obstacle problem for the $p-$Laplacian 
with an obstacle defined on $\Gamma_\e$. 
Our goal is to determine the asymptotic behaviour, as $\e\to0$, of the problem 
\begin{equation}\label{maineqe}
\min\left\{\int_\Om|\nabla v|^pdx+\int_\Om hvdx: v\in W^ {1,p}_0(\Om)\text{ and }v\ge \phi\text{ on }\Gamma_\e\right\}, 
\end{equation}
for given $h\in L^q(\Om)$, $1/p+1/q=1$ and $\phi\in W^{1,p}_0(\Om)\cap L^\infty(\Om)$. 

\medskip 

We make the following assumptions on $\Omega$, $T$, $\Gamma$, $d$ and $p$: 
\begin{itemize}
\item[$(A_1)$] $\Om\subset \R^d$ is a Lipschitz domain.

\item[$(A_2)$]
The compact set $T$ from which the holes are constructed must be sufficiently regular
in order for the mapping
\[
t\mapsto \cp(\{\Ga+t e\}\cap T)
\]
to be continuous, where $e$ is any unit vector. This is satisfied if, for example, $T$ has Lipschitz  boundary.

\item[$(A_3)$]
The size of the holes is
\[
a_\e=\e^{d/(d-p+1)}.
\]
This is the critical size that gives rise to an interesting effective equation for
\eqref{maineqe}.

\item[$(A_4)$]
The exponent $p$ in \eqref{maineqe} is in the range
\[
1<p<\frac{d+4}{4}.
\]

This is to ensure that the holes are large enough that we are able to effectively estimate the intersections
between the surface $\Ga$ and the holes $T_\e$, of size $a_\e$. See the discussion following the estimate \eqref{eq15}. 
In particular, if $p=2$ then $d>4$. 

\end{itemize}

These are the assumptions required for using the framework from \cite{KS}, though the $(A_4)$ is stricter here. 

\subsection{Main results}
The following theorems contain the main results of the present paper.

% Theorem 1

\begin{theorem}\label{thm1}
Suppose $\Gamma$ is a $C^2$ convex surface. Let $I_\e\subset [0,1)$ be an interval, let $Q'\subset \R^{d-1}$ be a cube and let 
\[A_\e = \#\left\{k'\in \Z^{n-1}\cap \e^{-1}Q': \frac{g(\e k')}{\e}\in I_\e \modulo{1}\right\}. 
\] 
Then 
\[
\left|\frac{A_\e}{N_\e}-|I_\e|\right| = O(\e^{\frac13}),
\]
where $N_\e = \#\{k'\in \Z^{d-1}\cap \e^{-1}Q'\}$. 
\end{theorem}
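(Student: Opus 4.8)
The plan is to prove equidistribution mod $1$ of the real sequence $\bigl(g(\e k')/\e\bigr)_{k'}$ by the classical route of Weyl sums: bound the associated exponential sums using van der Corput's second--derivative estimate, convert to a discrepancy bound via the Erd\H{o}s--Tur\'an inequality, and optimise. Write $N_\e=\#(\Z^{d-1}\cap\e^{-1}Q')$, which is comparable to $\e^{-(d-1)}$, and for a nonzero integer $m$ set
\[
S_\e(m)=\frac{1}{N_\e}\sum_{k'\in\Z^{d-1}\cap\e^{-1}Q'}e^{2\pi i m\,g(\e k')/\e}.
\]
By the one--dimensional Erd\H{o}s--Tur\'an inequality applied to the points $g(\e k')/\e\bmod 1$, for every positive integer $M$,
\[
\left|\frac{A_\e}{N_\e}-|I_\e|\right|\ \lesssim\ \frac1M+\sum_{m=1}^{M}\frac1m\,|S_\e(m)|,
\]
so the task reduces to estimating $|S_\e(m)|$.

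To bound $S_\e(m)$, fix all coordinates of $k'$ except one, say $k_1'$, which then ranges over an interval slice of $\Z\cap\e^{-1}Q'$ containing $\lesssim\e^{-1}$ points (when $d-1=1$ there is simply one such slice). Apply van der Corput's lemma to the phase $\psi(t)=m\,g(\e t,\e k_2',\dots,\e k_{d-1}')/\e$, for which $\psi''(t)=m\e\,\partial_{11}g$. Since $\Ga$ is $C^2$ and strictly convex, $\partial_{11}g$ is bounded above and below by positive constants on the compact cube $\overline{Q'}$, hence $|\psi''|\asymp m\e$ with constant sign, and van der Corput's estimate gives a slice sum bounded by $\lesssim \e^{-1}(m\e)^{1/2}+(m\e)^{-1/2}$. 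Summing over the $\lesssim\e^{-(d-2)}$ choices of the remaining coordinates and dividing by $N_\e$ yields
\[
|S_\e(m)|\ \lesssim\ (m\e)^{1/2}+\e^{1/2}m^{-1/2}.
\]

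Feeding this into the Erd\H{o}s--Tur\'an bound, $\sum_{m=1}^{M}\frac1m|S_\e(m)|\lesssim \e^{1/2}\sum_{m\le M}m^{-1/2}+\e^{1/2}\sum_{m\le M}m^{-3/2}\lesssim \e^{1/2}M^{1/2}+\e^{1/2}$, so that
\[
\left|\frac{A_\e}{N_\e}-|I_\e|\right|\ \lesssim\ \frac1M+\e^{1/2}M^{1/2}.
\]
Choosing $M\asymp\e^{-1/3}$ balances the two terms and gives the claimed $O(\e^{1/3})$, uniformly in the interval $I_\e$ and in the cube $Q'$. The only delicate point is the van der Corput step, where one must verify that $\partial_{11}g$ does not degenerate (which is precisely strict convexity together with $C^2$ regularity on the compact slice), that $m\e$ remains small over the relevant range $m\le M\asymp\e^{-1/3}$ so the exponent--sum estimate is nontrivial, and that the incomplete boundary cells of $Q'$ contribute only to lower order; all three are routine under the stated hypotheses. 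I expect this van der Corput bookkeeping, rather than any conceptual difficulty, to be the main work.
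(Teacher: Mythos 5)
Your argument is correct and follows essentially the same route as the paper: both reduce to one--dimensional slices on which the phase has second derivative comparable to $m\e$, apply a second--derivative exponential--sum estimate (your van der Corput lemma is the paper's Erd\H{o}s--Koksma bound), feed the result into the Erd\H{o}s--Tur\'an inequality, and optimise at $M\asymp\e^{-1/3}$. The only cosmetic difference is that the paper applies Erd\H{o}s--Tur\'an separately to each slice and averages the resulting discrepancies, whereas you apply it once to the full point set and slice inside the Weyl sum; the two bookkeeping schemes are equivalent.
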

Next we establish an important approximation result. We use the notation $T_\e^k = \e k+a_\e T$ and $\Gamma_\e^k=\Gamma\cap T_\e^k$.
\begin{theorem}\label{thm2}
Suppose $\Gamma$ is a $C^2$ convex surface and $P_x$ a support plane of $\Gamma$ at the point $x\in \Ga$. 
Then 
\begin{itemize}
%
% Pointwise stability 
%
\item[{$\bf 1^\circ$}]
the $p-$capacity of $P_x^k=P_x\cap T^k_\e$ approximates $\cp_p(\Gamma^k_\e)$ as follows
\begin{equation}\label{star1}
\cp_p(\Gamma_\e^k) = \cp_p(P_{x}^k\cap \{a_\e T+\e k\}) + o(a_\e^{d-p}),  
\end{equation}
where $x\in \Gamma_\e^k$. 
%
% Support plane stability  
%
\item[{$\bf 2^\circ$}]
 Furthermore, if $P_1$ and $P_2$ are two planes that intersect $\{a_\e T+\e k\}$ at a point $x$, with normals 
$\nu_1,\nu_2$ satisfying $|\nu_1-\nu_2|\le \delta$ for some small $\delta>0$, then 
\begin{equation}\label{star2}
|\cp_p(P_1\cap \{a_\e T+\e k\})-\cp_p(P_2\cap \{a_\e T+\e k\})|\le c_\delta a_\e^{d-p}, 
\end{equation}
where $\lim_{\delta\to0}c_\delta=0$.  
%\end{itemize}
\end{itemize}
\end{theorem}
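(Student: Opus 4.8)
The plan is to reduce both assertions to a single continuity property of the functional $(\Pi,S)\mapsto \cp_p(\Pi\cap S)$, where $\Pi$ ranges over hyperplanes through $T$ and $S$ over compact perturbations of $T$, by exploiting the scaling law of $p$-capacity together with the $C^2$ regularity of $\Gamma$. First I would rescale: since $\cp_p(\lambda E)=\lambda^{d-p}\cp_p(E)$ for $1<p<d$, the affine map $y=(z-\e k)/a_\e$, which sends $T_\e^k$ onto $T$, gives $\cp_p(\Gamma_\e^k)=a_\e^{d-p}\cp_p(\widetilde\Gamma^k_\e\cap T)$ and $\cp_p(P_x^k\cap T_\e^k)=a_\e^{d-p}\cp_p(\widetilde P_x\cap T)$, where $\widetilde\Gamma^k_\e=a_\e^{-1}(\Gamma-\e k)$ and $\widetilde P_x=a_\e^{-1}(P_x-\e k)$ is again a hyperplane, now passing through the point $\widetilde x=a_\e^{-1}(x-\e k)\in T$. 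Thus \eqref{star1} is equivalent to $\cp_p(\widetilde\Gamma^k_\e\cap T)=\cp_p(\widetilde P_x\cap T)+o(1)$ as $\e\to0$, and one must check the error is uniform in $k$.

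Next I would flatten the surface using the second–order bound. Writing $\Gamma$ locally as the graph of a convex $C^2$ function and Taylor–expanding at $x$, the portion of $\Gamma$ lying over the hole $T_\e^k$ (of diameter $\le a_\e\operatorname{diam}T$) stays within normal distance $O(\|D^2g\|_\infty a_\e^2)$ of the support plane $P_x$ and is a graph over $P_x$ with gradient $O(\|D^2g\|_\infty a_\e)$. After rescaling, $\widetilde\Gamma^k_\e\cap T$ is the graph over $\widetilde P_x$ of a function $\psi_\e$ with $\|\psi_\e\|_{C^1}=O(a_\e)\to 0$, the constant depending only on $\|D^2g\|_\infty$ and $\operatorname{diam}T$ and hence uniform in $k$. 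The map straightening this graph is bi-Lipschitz on $\R^d$ with constant $1+O(a_\e)$ and displaces points by $O(a_\e)$; since $p$-capacity is quasi-invariant under bi-Lipschitz maps with constant tending to $1$, we get $\cp_p(\widetilde\Gamma^k_\e\cap T)=(1+O(a_\e))\,\cp_p(\widetilde P_x\cap T'_\e)$ for some compact $T'_\e$ with Hausdorff distance $d_H(T'_\e,T)=O(a_\e)$. So everything reduces to showing $\cp_p(\widetilde P_x\cap T'_\e)\to \cp_p(\widetilde P_x\cap T)$ as $d_H(T'_\e,T)\to0$, uniformly in the location of $\widetilde P_x$.

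This last continuity statement is where the regularity of $T$ from $(A_2)$ enters: with $\partial T$ Lipschitz one sandwiches $\widetilde P_x\cap T'_\e$ between the inner and outer $r_\e$-neighbourhoods of $\widetilde P_x\cap T$ with $r_\e=O(a_\e)$, and the $p$-capacity of these neighbourhoods converges to $\cp_p(\widetilde P_x\cap T)$ as $r_\e\to0$; the quantitative and uniform version of this is precisely what the quasi-uniform continuity method of \cite{KS} provides. Combining the three steps gives $1^\circ$ with $o(a_\e^{d-p})$ uniform over $k$. For $2^\circ$, rescale again and pick a rotation $R$ fixing $\widetilde x$ with $|R-\mathrm{Id}|\le C\delta$ and $R(\widetilde P_1)=\widetilde P_2$; as $R$ is an isometry, $\cp_p(\widetilde P_1\cap T)=\cp_p(R(\widetilde P_1\cap T))=\cp_p(\widetilde P_2\cap R(T))$ with $d_H(R(T),T)=O(\delta)$, so the same continuity (now in the $T$-slot, with the plane fixed) yields $|\cp_p(\widetilde P_1\cap T)-\cp_p(\widetilde P_2\cap T)|=\omega(\delta)$ with $\omega(\delta)\to0$, and multiplying by $a_\e^{d-p}$ gives \eqref{star2} with $c_\delta=\omega(\delta)$.

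The main obstacle is the continuity step: proving the quantitative, uniform convergence of $\cp_p(\Pi\cap S)$ under small Hausdorff perturbations of $S$ (and under the small rotations appearing in $2^\circ$), uniformly over all slicing hyperplanes $\Pi$ meeting $T$. Qualitative continuity for a fixed plane follows from $(A_2)$ and the capacitability of compact sets, but making it uniform over the whole family of planes — so that the error in \eqref{star1} is genuinely $o(a_\e^{d-p})$ rather than merely $o(a_\e^{d-p})$ for each individual $k$ — is what forces the use of the compactness/quasi-uniform-continuity machinery; in particular one must exclude degenerate configurations in which $\Pi$ is tangent to a large part of $\partial T$, and it is exactly the Lipschitz regularity of $\partial T$ in $(A_2)$ that prevents a jump in capacity there.
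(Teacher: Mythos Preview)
Your strategy is coherent but follows a genuinely different route from the paper. You reduce everything to a set--theoretic continuity statement: rescale by $a_\e$ so that the hole becomes $T$, straighten the rescaled surface to its tangent plane by a bi--Lipschitz map that is $O(a_\e)$--close to the identity, and then appeal to continuity of the map $S\mapsto \cp_p(\Pi\cap S)$ under Hausdorff perturbations of $S$, uniformly over all slicing hyperplanes $\Pi$. The paper does \emph{not} do this. It works directly with the capacitary potentials: after rescaling so that $S_1=\frac{1}{a_\e}\Gamma_\e^k$ and $S_2=\frac{1}{a_\e}P_x$ sit in a large ball $B_{R_\e}$, it approximates each $S_i$ by smooth neighbourhoods $D_i^t$, tests the equations for the potentials $v_i^t$ against $\psi(v_1^t-v_2^t)$, and after integration by parts and the coercivity inequality for the $p$--Laplacian obtains
\[
\int_{B_{R_\e}}|\nabla v_1-\nabla v_2|^{p}\;\le\; C\Bigl[\sup_{S_1}(1-v_2)+\sup_{S_2}(1-v_1)\Bigr]^{\min(1,p/2)}.
\]
The right side is then controlled using the weak maximum principle and the \emph{quasi--continuity of the Sobolev functions} $v_1,v_2$ (continuity off a set of small capacity), which gives a modulus $\omega(6a_\e)\to0$; uniformity in $k$ comes from the uniform $W^{1,p}$ bound on the potentials. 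Part $2^\circ$ reruns the same estimate with two planes.

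What each buys: the paper's potential--comparison never needs to understand how $\cp_p(\Pi\cap S)$ varies with $S$; it only needs $S_1$ and $S_2$ to be Hausdorff--close, so that the ``cross--suprema'' above are small. Your approach is more geometric and avoids the PDE computation, but the entire difficulty is pushed into your final continuity lemma. There is a gap there: the ``quasi--uniform continuity'' you cite from \cite{KS} is quasi--continuity of $p$--capacitary potentials (a Sobolev--function statement), not a result about uniform continuity of $S\mapsto\cp_p(\Pi\cap S)$ in Hausdorff distance, so the citation does not supply what you need. Moreover, hypothesis $(A_2)$ only gives continuity of $t\mapsto\cp_p(\{\Pi+t e\}\cap T)$ for a \emph{fixed} $T$ and a translated plane; it says nothing about perturbing $T$ to $T'_\e$ or about uniformity over all normals $\nu$. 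To close your argument you would need to prove, independently, that for Lipschitz $\partial T$ the functions $r\mapsto\cp_p(\Pi\cap T^{\pm r})$ converge to $\cp_p(\Pi\cap T)$ uniformly over all hyperplanes $\Pi$ meeting $T$ (including those nearly tangent to $\partial T$); this is plausible via a compactness--plus--monotone--convergence argument, but it is not in \cite{KS} and you have not provided it.
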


\medskip 
As an application of Theorems 1-2 we have 

%theorem 3
\begin{theorem}\label{mainthm}
Let $u_\e$ be the solution of \eqref{maineqe}. Then $u_\e\weak u$ in $W_0^{1,p}(\Om)$ as $\e\to0$, 
where $u$ is the solution to 
\begin{equation}\label{maineqhom}
\min\left\{\int_\Om|\nabla v|^pdx+ \int_{\Gamma\cap\Om}|(\phi-v)_+|^p\cp_{p,\nu(x)}(T)d\cal{H}^{d-1}+\int_\Om fvdx: v\in W^ {1,p}_0(\Om)\right\}.
\end{equation}
\end{theorem}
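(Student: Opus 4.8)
The plan is to use the standard $\Gamma$-convergence / energy-method framework for homogenization in perforated domains (as in \cite{KS}), adapted to the thin-obstacle setting where the constraint lives on the lower-dimensional set $\Gamma_\e$. Let $J_\e(v)=\int_\Om|\nabla v|^p\,dx+\int_\Om hv\,dx$ with the constraint $v\ge\phi$ on $\Gamma_\e$, and let $J(v)$ denote the functional in \eqref{maineqhom}. Since $\phi\in W^{1,p}_0\cap L^\infty$, the constant function $0$ is admissible after truncation, so $J_\e(u_\e)$ is bounded uniformly in $\e$; by coercivity of the $W^{1,p}_0$-norm (Poincaré) we extract a weakly convergent subsequence $u_\e\weak u$ in $W^{1,p}_0(\Om)$. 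It then suffices to prove (i) a liminf inequality $J(u)\le\liminf_\e J_\e(u_\e)$ and (ii) for every $v\in W^{1,p}_0(\Om)$ a recovery sequence $v_\e$ with $v_\e\ge\phi$ on $\Gamma_\e$ and $\limsup_\e J_\e(v_\e)\le J(v)$; uniqueness of the minimizer of the strictly convex $J$ then upgrades convergence of the whole sequence.

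For the recovery sequence (ii) I would build a corrector supported near $\Gamma$. Fix $v$ smooth; near a cell $T_\e^k$ meeting $\Gamma$, subtract a capacitary-type potential $w_\e^k$ that equals $(\phi-v)_+$ on $\Gamma_\e^k$ (more precisely on the approximating set $P_x^k\cap\{a_\e T+\e k\}$, using the support plane $P_x$ at a point $x\in\Gamma_\e^k$), vanishes outside a ball of radius $\sim\e/2$ about $\e k$, and is $p$-harmonic in between. By the choice $a_\e=\e^{d/(d-p+1)}$ in $(A_3)$, each such potential contributes Dirichlet energy $\sim |(\phi-v)(x)|^p\,\cp_p(P_x^k\cap\{a_\e T+\e k\})$, and by Theorem \ref{thm2}, part $1^\circ$, this equals $|(\phi-v)(x)|^p\,\cp_{p,\nu(x)}(T)\,a_\e^{d-p}+o(a_\e^{d-p})$ where $\nu(x)$ is the normal to $\Gamma$ at $x$. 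Summing over the $\sim\e^{-(d-1)}$ cells $k$ that meet $\Gamma$, with $a_\e^{d-p}\e^{-(d-1)}\to$ a finite constant (this is exactly what the critical scaling arranges) and using that the number of cells whose centres project to a given piece of $\Gamma$ is governed by the uniform distribution result Theorem \ref{thm1}, the Riemann sum converges to $\int_{\Gamma\cap\Om}|(\phi-v)_+|^p\cp_{p,\nu(x)}(T)\,d\mathcal H^{d-1}$. The cross terms between $\nabla v$ and $\nabla w_\e^k$ vanish in the limit because the supports have measure $\to 0$ and $\nabla w_\e^k\in L^p$ with the right scaling; density of smooth $v$ then gives the general case.

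For the liminf inequality (i) I would test $u_\e$ against the same family of correctors: writing $u_\e=\tilde u_\e+\sum_k w_\e^k$-type decompositions is delicate, so instead I would use the "oscillating test function" method — construct, for each smooth competitor $v$, a field $w_\e$ (the sum of the cell potentials above) such that $w_\e\weak 0$ in $W^{1,p}_0$ but $\int|\nabla w_\e|^p$ and the boundary interaction produce the capacity term, and pair $\int|\nabla u_\e|^{p-2}\nabla u_\e\cdot\nabla(\cdot)$ against it, exploiting the constraint $u_\e\ge\phi$ on $\Gamma_\e$ to get the sign-definite lower bound $\int_{\Gamma\cap\Om}|(\phi-u)_+|^p\cp_{p,\nu}(T)\,d\mathcal H^{d-1}$ in the limit. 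Convexity of $\xi\mapsto|\xi|^p$ handles the $\int|\nabla u_\e|^p$ term (weak lower semicontinuity), and $\int hu_\e\to\int hu$ by weak convergence since $h\in L^q$.

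The main obstacle is controlling the geometry of the intersections $\Gamma_\e^k$: because $\Gamma$ is curved, the holes $T_\e^k$ meet it along sets that are only approximately flat slices of $T$, and one must show the error made by replacing $\Gamma_\e^k$ with the support-plane slice $P_x^k\cap\{a_\e T+\e k\}$ is $o(a_\e^{d-p})$ uniformly and sums to zero — this is precisely where Theorem \ref{thm2} and the restriction $(A_4)$, $p<1+d/4$, enter (the curvature contributes an $O(a_\e)$ tilt of the slicing plane across a hole of size $a_\e$, costing a factor that is summable against $\e^{-(d-1)}$ only in this range, cf. the discussion after \eqref{eq15}). A secondary difficulty is that the constraint set $\{v\ge\phi\text{ on }\Gamma_\e\}$ is not closed under weak convergence, so passing the constraint to the limit must be done through the capacitary term itself rather than pointwise; this is handled by the quasi-uniform-convergence argument of \cite{KS}, which lets one pass to the limit in the nonlinear term $|\nabla u_\e|^{p-2}\nabla u_\e$ on the perforation region. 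Once (i) and (ii) are in place, $J(u)\le\liminf J_\e(u_\e)\le\limsup J_\e(u_\e)\le\limsup J_\e(v_\e)\le J(v)$ for the recovery sequence of the minimizer $v$ of $J$, forcing $u$ to minimize $J$, and strict convexity gives uniqueness and convergence of the full sequence.
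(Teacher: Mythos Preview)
Your overall strategy matches the paper's: build cell-by-cell capacitary correctors $w_\e^k$, verify they satisfy the corrector hypotheses $\mathbf{H1}$--$\mathbf{H3}$ (in particular the energy identity \eqref{key-00}), and then invoke the oscillating-test-function machinery of \cite{KS} to pass to the limit. The paper in fact proves only \eqref{key-00} here (via Lemma~\ref{lemma3} and Lemma~\ref{cubelemma}) and explicitly delegates the liminf/limsup argument to \cite{KS}, so your $\Gamma$-convergence packaging is a faithful paraphrase of what happens there.

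There is, however, a genuine misstep in how you invoke Theorem~\ref{thm2}. You write that ``by Theorem~\ref{thm2}, part $1^\circ$, [the single-cell energy] equals $|(\phi-v)(x)|^p\,\cp_{p,\nu(x)}(T)\,a_\e^{d-p}+o(a_\e^{d-p})$''. That is not what Theorem~\ref{thm2} says. Part $1^\circ$ only replaces the curved slice $\Gamma_\e^k$ by the \emph{flat} slice $P_x^k\cap T_\e^k$ at cost $o(a_\e^{d-p})$; the capacity of that flat slice is $a_\e^{d-p}\cp_p\big(T\cap\{P_{\nu(x)}+t_k\nu(x)\}\big)$ for one particular offset $t_k$ depending on where the plane happens to cut the hole, \emph{not} the mean value $\cp_{p,\nu(x)}(T)=\int_{-\infty}^\infty\cp_p(T\cap\{P_{\nu(x)}+t\nu(x)\})\,dt$. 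The mean capacity only appears \emph{after} summing over cells: Theorem~\ref{thm1} guarantees that the offsets $t_k$ (essentially the fractional parts $\e^{-1}g(\e k')\bmod 1$) are equidistributed with discrepancy $O(\e^{1/3})$, and it is this equidistribution that turns the sum of slice capacities into the integral defining $\cp_{p,\nu(x)}(T)$; see Lemma~\ref{lemma3} and its appeal to Lemma~4 of \cite{KS}. Relatedly, your cell count is off: the number of $k$ with $\Gamma_\e^k\neq\emptyset$ is not $\sim\e^{-(d-1)}$ but $\sim(a_\e/\e)\,\e^{-(d-1)}$ (only those $k'$ with $\e^{-1}g(\e k')$ in an interval of length $\sim a_\e/\e$ mod~$1$ contribute), and the product $a_\e^{d-p}\e^{-(d-1)}$ you claim is finite actually diverges. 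The correct balance is $(a_\e/\e)\,\e^{1-d}\cdot a_\e^{d-p}=a_\e^{d-p+1}\e^{-d}=1$ under $(A_3)$. Once you repair these two points, the mechanism you describe is exactly the one the paper uses.
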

In \eqref{maineqhom}, $\nu(x)$ is the normal of $\Gamma$ at $x\in \Gamma$ and $\cp_{p,\nu(x)}(T)$ is the mean $p$-capacity of $T$ with respect to the 
hyperplane $P_{\nu(x)}=\{y\in \R^ d:\nu(x)\cdot y=0\}$, given by 
\begin{equation}\label{meancap}
\cp_{p,\nu(x)}(T) = \int_{-\infty}^ {\infty}\cp_p(T\cap\{P_{\nu(x)}+t\nu(x)\})dt, 
\end{equation}
where $\cp_p(E)$ denotes $p$-capacity of $E$ with respect to $\R^ d$. 

Theorem \ref{mainthm} was proved by the authors in \cite{KS} under the assumption that $\Gamma$ is a hyper plane, which was in turn a generalization of the paper \cite{LSY}. In a larger context, Theorem \ref{mainthm} contributes to the theory of homogenization in non-periodic perforated domains, in that the support of the obstacle, $\Gamma_\e$, is not periodic. Another class of well-studied non-periodic 
perforated domains, not including that of the present paper, 
is the random stationary ergodic domains introduced in \cite{CaffM}. In the case of stationary ergodic domains the perforations are situated on lattice points, which is not the case for the set $\Gamma_\e$. The perforations, i.e.\ the components of $\Gamma_\e$, have desultory (though deterministic by definition) distribution. For the periodic setting \cite{CM} is a standard reference.

The proof of Theorem \ref{mainthm} has two fundamental ingredients. First the structure of the set $\Gamma_\e$ is analysed using tools from the theory of uniform distribution,  Theorem 1. 
We prove essentially that the components of $\Gamma_\e$ are uniformly distributed over $\Gamma$ with a good bound on the discrepancy. This is achieved by studying the distribution of the sequence 
\begin{equation}\label{gseq}\{\e^{-1}g(\e k')\}_{k'},\end{equation} for $g$ defined by \eqref{g} and $\e k'\in Q'$.  
Second, we construct a family of well-behaved correctors based on the result of Theorem 2.

The major difficulty that arises when $\Gamma$ is a more general surface than a hyperplane is to estimate the discrepancy of the distribution of 
(the components of) $\Gamma_\e$ over $\Gamma$, which is achieved through studying the discrepancy of $\{\e^{-1}g(\e k')\}_{k'}$. For a definition of discrepancy, see section \ref{EKthm}. In the framework of uniform convexity we can apply a theorem of Erd\"os and Koksma which gives good control of the discrepancy.

%%%%%%%%%%%%%%%%%%%%%%%%%%%%%%%%%%%%%%%%%%%
%
%                  SECTION
%
%%%%%%%%%%%%%%%%%%%%%%%%%%%%%%%%%%%%%%%%%%%
 \section{Discrepancy and the Erd\"os-Koksma theorem}\label{EKthm}

In this section we formulate a general result for the uniform distribution of a sequence and 
derive a decay estimate for the corresponding discrepancy. 

\begin{defn}\label{defn-dscrp}
The discrepancy of the first $N$ elements of a sequence $\{s_j\}_{j=1}^\infty$ is given by 
\[D_N=\sup_{I\subset(0,1]}\left|\frac{A_N}{N}-|I|\right|,\]
where $I$ is an interval, $|I|$ is the length of $I$ and $A_N$ is the number of $1\le j\le N$ for which $s_j\in I\modulo{1}$. 
\end{defn}

We first recall the Erd\"os-Tur\'an inequality, see Theorem 2.5 in \cite{KN},  
for the discrepancy of the sequence $\{s_j\}_{j=1}^\infty$ 
\begin{equation}\label{erd-tur}
D_N\leq \frac1n+\frac1N\sum_{k=1}^n \frac1k \left|\sum_{j=1}^Ne^{2\pi if(j)k}\right|
\end{equation}
where $n$ is a parameter to be chosen so that 
the right hand side has optimal decay as $N\to \infty$. 
Observe that $s_j$ is the $j-$th element of the sequence which in our case is $s_j=f(j)$ for a given 
function $f$ and $N=\left[ \frac1\e \right]$.

We employ the following estimate of Erd\"os and Koksma (\cite{KN}, Theorem 2.7) in order to estimate 
the second sum in (\ref{erd-tur}): let $a, b\in \mathbb N$ such that $0<a<b$ then one has the estimate 
\begin{equation}\label{erd-koks}
\left|\sum_{j=1}^Ne^{2\pi if(j)k}\right|\le (|F_k'(b)-F_k'(a)|+2)\left(3+\frac1{\sqrt\rho}\right)
\end{equation}
where $F_k(t) = kf(t)$ and $F_k''(t)\ge\rho>0$ for some positive number $\rho.$
In order to apply this result to our problem we first need to reduce the dimension of \eqref{gseq} to one. To do so
let us assume that the obstacle $\Gamma$ is given as the graph of a function $x_d=g(x')$ where 
$g$ is strictly convex $C^2$ function such that 
\begin{equation}\label{uni-convex}
c_0\delta_{\alpha, \beta}\le D_{x_\alpha x_\beta}g(x')\le C_0\delta_{\alpha, \beta}, \quad 1\le \alpha, \beta\le d-1
\end{equation}
for some positive constants $c_0<C_0$. 

Next we rescale the $\e$-cells and consider the normalised problem in the unit cube $[0, 1]^d$. The resulting
function is $f(j)=\frac{g(\e j)}{\e}, j\in \Z^{d-1}$. 

If $d=2$ then we can directly apply (\ref{erd-koks}) to the scaled function $f$ above.
Otherwise for $d>2$ we need an estimate for the multidimensional discrepancy in terms 
of $D_N$ introduced in Definition \ref{defn-dscrp}, a similar idea was used in   \cite{KS} for the linear obstacle.
Suppose for a moment that this is indeed the case. Then 
we can take $F_k(t)=kf(t)$ in \eqref{erd-koks} 
and noting 
\begin{equation}\label{Hessian}
D_{x_\alpha}f(x')=kD_\alpha g(\e x'), \qquad D_{x_\alpha}^2f(x')=k\e D_\alpha^2 g(\e x')\ge k\e c_0, \quad 1\le \alpha\le d-1
\end{equation}
one can 
proceed as follows

\begin{eqnarray*}
\left|\sum_{j=1}^Ne^{2\pi if(j)k}\right|&\le& (|kD_{x_\alpha}g(\e N)-kD_\alpha g(\e )|+2)\left(3+\frac1{\sqrt{k\e c_0}}\right)\\\nonumber
&\le &(k\e C_0(N-1)+2)\left(3+\frac1{\sqrt{k\e c_0}}\right)\\\nonumber
&\le &k\left(\e C_0(N-1)+\frac2k\right)\left(3+\frac1{\sqrt{k\e c_0}}\right)\\\nonumber
&\le &k\left(\e C_0(N-1)+\frac2k\right)\left(3+{\sqrt{\frac N{kc_0}}}\right)\\\nonumber
&\le&\lambda k\left(1+\sqrt{\frac N{k}}\right)
\end{eqnarray*}
for some tame constant $\lambda >0$ independent of $\e, k$. Plugging this into 
\eqref{erd-tur} yields

\begin{eqnarray*}
D_N&\le& \frac1n+\frac{\lambda}N\sum_{k=1}^n\left(1+\sqrt{\frac N{k}}\right)\\\nonumber
&=&\frac1n +\frac{\lambda n}N+ \frac{\lambda}{\sqrt N} \sum_{k=1}^n \frac1{\sqrt{k}}\\\nonumber
&\leq& \frac1n+\overline{\lambda}\sqrt{\frac nN}\left(1+\sqrt{\frac n{N}}\right)
\end{eqnarray*}
for another tame constant $\overline \lambda>0$.
Now to get the optimal decay rate we choose $\frac1n =\sqrt{\frac nN}$ which yields $N=n^3$ and hence
$$n=N^{\frac13}\approx \frac1{\e^{\frac13}}$$
and we arrive at the estimate

\begin{equation}\label{DN-est}
D_N = O( \e^{\frac13}).
\end{equation}

{\subsection{Proof Theorem 1}}
\begin{proof}
Suppose $Q' $ is a cube of size $r$. Then there is a cube $Q''\subset \R^{d-2}$ such that $Q' = [\alpha,\beta]\times Q'$, $\beta-\alpha=r$. 
We may rewrite $A_\e$ as 
\begin{align*}
A_\e&=\sum_{k''\in \e^{-1}Q''\cap\Z^{d-2}}\#\left\{k_1\in\Z:a\le k_1\le b\text{ and }\e^{-1}g(\e k_1+\e k'')\in I_\e\modulo{1}\right\},
\end{align*}
where $(k_1,k'')=k'$, $a,b$ are the integer parts of $\e^{-1}\alpha$ and $\e^{-1}\beta$ respectively and $|(b-a)- \e^{-1}r|\le 1$. We also note that $N_\e = (\e^{-1}r)^{d-1}+O(\e^{-1}r)^{d-2}$. 
Consider 
 $$A_\e^1(k'')=\#\left\{k_1\in\Z:a\le k_1\le b\text{ and }\e^{-1}g(\e k_1+\e k'')\in I_\e\modulo{1}\right\}.$$ 
 Then we have 
\begin{align}\label{decomp}
\frac{A_\e}{N_\e}-|I_\e|&=\frac{1}{(\e^{-1}r)^{d-2}}\sum_{k''\in \e^{-1}Q''\cap\Z^{d-2}}\frac{A_\e^1(k'')}{(\e^{-1}r)}-|I_\e|. 
\end{align}
For each $k''$ the function $h:s\to \e^{-1}g(\e s+\e k'')$ satisfies 
$|h'(s)|\le C_1$ and $h''(s)\ge \rho\e$ for $a\le s\le b$. Thus we may apply the Erd\"os-Koksma Theorem as described above and conclude that 
\[
\left|\frac{A_\e^1(k'')}{(\e^{-1}r)}-|I_\e|\right|\le C\e^{\frac13}.
\]
 It follows that the modulus of the left hand side of \eqref{decomp} is bounded 
by $C\e^{\frac13}$, proving the theorem. 
\end{proof}

\medskip

%%%%%%%%%%%%%%%%%%%%%%%%%%%%%%%%%%%%%%%%%%%
%
%                  SECTION
%
%%%%%%%%%%%%%%%%%%%%%%%%%%%%%%%%%%%%%%%%%%%
\section{Correctors }

The purpose of this section is to construct a sequence of correctors that satisfy the hypotheses given below. 
Once we have established the existence of these correctors, the proof of the Theorem \ref{mainthm} is identical to the planar case treated in \cite{KS}.
\begin{itemize}
\item[$\bf H1$] $0\leq w_\e\leq 1$ in $\R^d$, $w_\e=1$ on $\Gamma_\e$ and $w_\e \weak 0$ in $W^{1,p}_{\loc}(\R^d)$,

\item[$\bf H2$] $\int_\Om |\nabla w_\e|^pfdx\to \int\limits_{\Gamma} f(x)\cp_{p,\nu_x}d\cal H^{d-1}$, for any $f\in W_0^{1,p}(\Omega)\cap L^\infty(\Omega)$,

\item[$\bf H3$] (weak continuity) for any $\phi_\e\in W_0^{1,p}(\Om)\cap L^\infty(\Om)$
such that
\[
\left\{\begin{array}{l}
\sup\limits_{\e>0}\|\phi_\e\|_{L^\infty(\Om)}<\infty,\\
\phi_\e=0 \text{ on } \Ga_\e \text{ and } \phi_\e\weak \phi\in W_0^{1,p}(\Om),
\end{array}\right.
\]
we have
\[
\langle-\Delta_p w_\e, \phi_\e\rangle\rightarrow
\langle\mu, \phi \rangle
\]
with
\begin{equation}\label{mu-0}
 d\mu(x)=\cp_{p,\nu(x)}d\cal H^{d-1}\L  \Ga,
\end{equation}
where $%c_{\nu(x)} 
\cp_{p,\nu(x)}$ is given by \eqref{meancap} and $\cal H^s\L\Ga$ is the restriction of $s-$dimensional
Hausdorff measure on $\Ga$.
\end{itemize}

Setting  $\Gamma_\e^k:=\Gamma\cap \{a_\e T+\e k\}\neq\emptyset$, 
we define $w_\e^k$ by 
\begin{align}\nonumber
&\Delta_p w_\e^k=0\text{ in }B_{\e/ 2}(\e k)\setminus \Gamma_\e^k,\\\nonumber
&w_\e^k=0 \text{ on } \partial B_{\e/ 2}(\e k),\\\nonumber
&w_\e^k = 1\text{ on }\Gamma_\e^k. 
\end{align}

Then it follows from the definition of $\cp_p$  \cite{Frehse} that 

\[
\int_{B_{\e/2}(\e k)}|\nabla w_\e^k|^pdx = \cp_p(\Gamma_\e^k) + o(a_\e^{d-p}). 
\]
Indeed, we have 
\begin{eqnarray*}
\cp_p(\Gamma_\e^k, B_{\e/2}(\e k) )&=&\inf\left\{\int_{B_{\e/2}}|\nabla w|^p : w\in W^{1,p}_0(B_{\e/2}(\e k)) \ {\rm and} \ w=1 \ {\rm on } \ \Gamma_\e^k \right\}\\\nonumber
&=&a_\e^{d-p}\inf\left\{\int_{B_{\e/{2a_\e}}}|\nabla w|^p : w\in W^{1,p}_0(B_{\e/2a_\e} \ {\rm and} \ w=1 \ {\rm on } \ \frac1{a_\e}\Gamma_\e^k \right\}\\\nonumber
&=&a_\e^{d-p}\left(\cp_p\left(\frac1{a_\e}\Gamma_\e^k\right)+o(1)\right)\\\nonumber
&=&\cp_p\left(\Gamma_\e^k\right)+o(a_\e^{d-p}).
\end{eqnarray*}

Note that $\cp_p(\Gamma_\e^k)=O(a_\e^{d-p})$ since $\Gamma_\e^k=\Gamma\cap\{\e k+ a_\e T\}$ and $\cp_p(t E)=t^{d-p}\cp_p(E)$ if $t\in \R_+$ and $E\subset \R^d$. 
If $Q'$ is a cube in $\R^{d-1}$, the components of $\Gamma_\e\cap Q'\times\R$ are of the form 
$\Gamma_\e^k=\Gamma\cap \{(\e k',\e k_d)+a_\e T\}$ for $\e k'\in Q'$. In particular, 
$\Gamma_\e^ k\neq\emptyset $ if and only if $\e^{-1} g(\e k')\in I_\e \modulo{1}$ where $|I_\e|=O(a_\e/\e)$.   
Thus Theorem \ref{thm1} tells us that the number of components of $\Gamma_\e\cap Q'\times\R$ equals 
$A_\e = |I_\e|N_\e+N_\e O(\e^{\frac13})$, or explicitly 
\begin{equation}\label{eq15}
\left|\frac{\frac{A_\e}{N_\e}}{\frac{a_\e}{\e}}-1\right|=\frac{{O}(\e^{\frac13})}{\frac{a_\e}\e}.
\end{equation}
Here we need to have $\e^{1/3}=o(|I_\e|)$, 
which is equivalent to $(A_4)$. Since 
\[\int_{B_{\e/2}(\e k)}|\nabla w_\e^k|^pdx = \cp_p(\Gamma_\e^k) + o(a_\e^{d-p}),\] 
we get 
\[
\int_{\R\times Q'}|\nabla w_\e|^ pdx\le C(|I_\e|N_\e\cp_p(\Gamma_\e^k))\le C\frac{a_\e}{\e}\e^ {1-d}|Q'|a_\e^ {n-p}=C|Q'|. 
\]
Thus $\int_K|\nabla w_\e|^p$ is uniformly bounded on compact sets $K$. Since $w_\e(x)\to 0$ pointwise for $x\not\in \Gamma$, 
$\bf H1$ follows. 

When verifying $\bf H_2$ and $\bf H_3$ we will only prove that 
\begin{equation}\label{key-00}
\lim_{\e\to0}\int_Q|\nabla w_\e|^pdx= \int_{\Gamma\cap Q}c_{\nu(x)}d\mathcal{H}^{d-1}(x), \quad \text{for all cubes }Q\subset \R^d. 
\end{equation}
Once this has been established the rest of the proof is identical to that given in \cite{KS}.

\medskip

\section{Proof of Theorem 2}

\begin{proof}
{$\bf 1^\circ$}\ 
Set  $R_\e=\frac\e{2a_\e}\to \infty$, then after scaling 
we have to prove that 
\begin{equation}\label{o-small0}
\int_{B_{R_\e}}|\nabla v_1|^p-\int_{B_{R_\e}}|\nabla v_2|^p=o(1)
\end{equation}
uniformly in $\e$ where 
%\begin{eqnarray}
\begin{align}\nonumber
&\Delta_p v_i=0\text{ in } B_{R_\e}\setminus S_i,\\\nonumber
&v_i=0 \text{ on } \partial B_{R_\e},\\\nonumber
&v_i = 1\text{ on }S_i. 
\end{align}
and $S_1=\frac1{a_\e}\Gamma^k_\e, S_2=\frac1{a_\e}P_{x}$.
%\end{eqnarray}

We approximate  $v_i$ in the domain $B_{R_\e}\setminus D^t_i$ with  
$D^t_i$ being a bounded domain with smooth boundary and $D^t_i\to S_i $ as $t\to 0$ in Hausdorff distance. 
Consider 
\begin{align}\nonumber
&\Delta_p v_i^t=0\text{ in } B_{R_\e}\setminus D^t_i, \\\nonumber
&v_i^t=0 \text{ on } \partial B_{R_\e},\\\nonumber
&v_i^t = 1\text{ on }\partial D^t_i. 
\end{align}

Observe  that $\int_{B_{R_\e}\setminus D_i^t}|\nabla v_i^t|^{p}, i=1, 2$ remain bounded 
as $t\to 0$ thanks to Caccioppoli's inequality. % in $B_5$. 
Indeed, 
$w=(1-v_i^t)\eta\in W^{1, p}_0(B_5\setminus D_i^t)$ where $\eta\in C_0^\infty(B_5)$ such that 
$0\leq \eta\leq 1$ and $\eta\equiv1$ in $B_3$. Using  $w$ as a test function we conclude that 
$$\int_{B_5\setminus D_i^t}|\nabla v_i^t|^p\eta=\int_{B_5\setminus D_i^t}|\nabla v_i^t|^{p-2}\nabla v_i^t\nabla\eta (1-v_i^t).$$  
Since  $\eta\equiv 1$ in $B_3$ then applying H\"older inequality we infer that 
$\int_{B_3\setminus D_i^t}|\nabla v_i^t|^p\le C\int_{B_5}(1-v_i^t)^p$.
In $B_{R_\e}\setminus B_2$ the $L^p$ we compare $W(x)=|x/2|^{\frac{p-d}{p-1}}$ with $v_i$. Note that 
our assumption $A_4$ implies that $p<d$. Moreover, since $W$ is $p-$harmonic in 
$B_{R_\e}\setminus B_2$ then the comparison principle yields $v_i\leq W$ in $B_{R_\e}\setminus B_2$.
From the proof of Caccioppoli's inequality above choosing non-negative $\eta\in C^\infty(\R^d)$ such that $\eta\equiv 0$ in $B_2$, $\frac12\le \eta\le 1$ in $B_{R_\e}\setminus B_3$,  
and $\eta=1$ in $\R^d\setminus B_{R_\e}$ and using $\eta v_i\in W_0^{1, p}(B_{R_\e}\setminus B_2)$ 
as a test function we infer 
$$\int_{B_{R_\e}\setminus B_3}|\nabla v_i|^p\leq \frac C{R^p_\e}\int_{B_{R_\e}\setminus B_2}v_i^p\leq \frac{C}{R_\e^{\frac1{p-1}}}\to 0 \quad\text{as}\ \e\to 0$$
 where the last 
bound follows from the estimate $v_i\leq W$. Combining these estimates we infer
\begin{equation}\label{Cacc}
\|v_i^t\|_{W^{1, p}(B_{R_\e})}\le K, \quad i=1,2
\end{equation}
for some tame constant $K$ independent of $t$ and $\e$.
Thus, by construction $v^t_i\rightharpoonup v_i$ weakly in $W^{1, p}_0(B_{R_\e})$.

\medskip 

Let $\psi\in C^\infty(\R^d)$ such that $\supp \psi\supset D_1^t\cup D_2^t$ 
and $\psi\equiv 1$ in $\R^d\setminus B_2$. Then the function $\psi(v_1^t-v_2^t)\in W^{1,p}_0(B_{R_\e})$ and  
it vanishes on $\supp\psi\supset D_1^t\cup D_2^t$. 
Thus we have 
\begin{eqnarray*}
\int_{B_{R_\e}}(\nabla v_1^t|\nabla v_1^t|^{p-2}-\nabla v_2^t|\nabla v_2^t|^{p-2})(\nabla v_1^t-\nabla v_2^t)\psi=-\int_{B_{R_\e}}(\nabla v_1^t|\nabla v_1^t|^{p-2}-\nabla v_2^t||\nabla v_2^t|^{p-2})(v_1^t- v_2^t)\nabla \psi
\end{eqnarray*}
Note that $v_1^t-v_2^t=0$ on $D^t_1\cap D_2^t$.
Choosing  a sequence $\psi_n$ such that $1-\psi_m$ converges to the characteristic function 
$\chi_{D_1^t\cup D_2^t}$ of the set $D_1^t\cup D_2^t$ we conclude 

\begin{equation}\label{26}
\int_{B_{R_\e}}(\nabla v_1^t|\nabla v_1^t|^{p-2}-\nabla v_2^t|\nabla v_2^t|^{p-2})(\nabla v_1^t-\nabla v_2^t)=
J_1+J_2
\end{equation}
where 
\begin{eqnarray*}
J_1=\int_{\partial D_1^t} (1- v_2^t)[\partial_\nu v_1^t|\nabla v_1^t|^{p-2}-\partial_\nu v_2^t|\nabla v_2^t|^{p-2}], \\\nonumber
J_2=\int_{\partial D_2^t}(v_1^t- 1)[\partial_\nu v_1^t|\nabla v_1^t|^{p-2}-\partial_\nu v_2^t|\nabla v_2^t|^{p-2}].
\end{eqnarray*}

Notice that on $\partial D^t_i$ we have that $\nu=-\frac{\nabla \psi_m}{|\nabla \psi_m|}$ is the unit normal pointing inside 
$D^t_i$. We denote $n=-\nu$ and then we have that

\begin{eqnarray*}
-\int_{\partial D_1^t} (1- v_2^t)\partial_\nu v_2^t|\nabla v_2^t|^{p-2}&=&\int_{\partial D_1^t} (1- v_2^t)\partial_n v_2^t|\nabla v_2^t|^{p-2}\\\nonumber
&=&\int_{\partial (D_1^t\cap D^t_2)} (1- v_2^t)\partial_n v_2^t|\nabla v_2^t|^{p-2}\\\nonumber
&=&\int_{D_1^t\setminus D_2^t} \text{div}((1-v_2^t)\nabla v_2^t|\nabla v_2^t|^{p-2})\\\nonumber
&=&-\int_{D_1^t\setminus D_2^t} |\nabla v_2^t|^p,
\end{eqnarray*}
and similarly 
\begin{eqnarray*}
\int_{\partial D_2^t}(v_1^t- 1)\partial_\nu v_1^t|\nabla v_1^t|^{p-2}=-\int_{D_2^t\setminus D_1^t} |\nabla v_1^t|^p .
\end{eqnarray*}
Setting 
\begin{equation}\label{exprs-I}
I=\int_{B_{R_\e}}(\nabla v_1^t|\nabla v_1^t|^{p-2}-\nabla v_2^t|\nabla v_2^t|^{p-2})(\nabla v_1^t-\nabla v_2^t)
\end{equation}
and returning to \eqref{26} we infer 

\begin{eqnarray}\nonumber
I&=&-\int_{D_1^t\setminus D_2^t} |\nabla v_2^t|^p-\int_{D_2^t\setminus D_1^t} |\nabla v_1^t|^p+
%\\\nonumber&&+ 
\int_{\partial D_1^t} (1- v_2^t)\partial_\nu v_1^t|\nabla v_1^t|^{p-2}
%\\\nonumber&&
- \int_{\partial D_2^t}(v_1^t- 1)\partial_\nu v_2^t|\nabla v_2^t|^{p-2}\\\nonumber
&\le &\int_{\partial D_1^t} (1- v_2^t)\partial_\nu v_1^t|\nabla v_1^t|^{p-2}
%\\\nonumber&&
- \int_{\partial D_2^t}(v_1^t- 1)\partial_\nu v_2^t|\nabla v_2^t|^{p-2}\\\nonumber
&\le &\sup_ {D_1^t}(1-v_2^t)\int_{\partial D_1^t} |\partial_\nu v_1^t||\nabla v_1^t|^{p-2}
%\\\nonumber&&
+\sup_ {D_2^t}(1-v_1^t) \int_{\partial D_2^t}|\partial_\nu v_2^t||\nabla v_2^t|^{p-2}.\\\nonumber
\end{eqnarray}
But on $\partial D_i^t$ we have  $\partial_\nu v_i^t\ge 0$ ($\nu$ points inside $D_i^t$)
 because $v_i^t$ attains its maximum on $\partial D_i^t$.
Thus we can omit the absolute values of the normal derivatives and obtain 
\begin{eqnarray*}
I&\le& \sup_ {D_1^t}(1-v_2^t)\int_{\partial D_1^t} \partial_\nu v_1^t|\nabla v_1^t|^{p-2}
+\sup_ {D_2^t}(1-v_1^t) \int_{\partial D_2^t}\partial_\nu v_2^t|\nabla v_2^t|^{p-2}\\\nonumber
&=&
\sup_ {D_1^t}(1-v_2^t)\int_{B_{R_\e}\setminus D_1^t} \text{div}(v_1\nabla v_1^t|\nabla v_1^t|^{p-2})
+\sup_ {D_2^t}(1-v_1^t) \int_{B_{R_\e}\setminus D_2^t}\text{div}(v_2\nabla v_2^t|\nabla v_2^t|^{p-2})\\\nonumber
&=&\sup_ {D_1^t}(1-v_2^t)\int_{B_{R_\e}\setminus D_1^t}|\nabla v_1^t|^{p}
+\sup_ {D_2^t}(1-v_1^t) \int_{B_{R_\e}\setminus D_2^t}|\nabla v_2^t|^{p}.
\end{eqnarray*}
Recall that by Lemma 5.7 \cite{MZ} there is a generic constant $M>0$ such that 
\begin{equation}\label{coercive}
(|\xi|^{p-2}\xi-|\eta|^{p-2}\xi)(\xi-\eta)\ge M\left\{
\begin{array}{lll}
|\xi-\eta|^p &\text{if}\ p>2,\\
|\xi-\eta|^2(|\xi|+|\eta|)^{p-2} &\text{if}\ 1<p\le 2
\end{array}
\right.
\end{equation}
for all $\xi, \eta\in \R^d$.

First suppose that $p>2$ then applying  inequality \eqref{coercive} to \eqref{exprs-I} yields
\begin{eqnarray*}
I \ge
M\int_{B_{R_\e}}|\nabla v_1^t-\nabla v_2^t|^{p}.
\end{eqnarray*}
As for the case $1<p\le 2$ then from \eqref{coercive} we have 

\begin{eqnarray*}
I\ge
M\int_{B_{R_\e}}|\nabla v_1^t-\nabla v_2^t|^2(|\nabla v_1^t|+|\nabla v_2^t|)^{p-2}.
\end{eqnarray*}
But, from H\"older's inequality and \eqref{Cacc} we get 
\begin{eqnarray}
\int_{B_{R_\e}}|\nabla v_1^t-\nabla v_2^t|^p&=& \int_{B_{R_\e}}|\nabla v_1^t-\nabla v_2^t|^p(|\nabla v_1^t|+|\nabla v_2^t|)^{\frac{p(p-2)}2}(|\nabla v_1^t|+|\nabla v_2^t|)^{-\frac{p(p-2)}2}\\\nonumber
&\le&\left( \int_{B_{R_\e}}|\nabla v_1^t-\nabla v_2^t|^2(|\nabla v_1^t|+|\nabla v_2^t|)^{p-2}\right)^{\frac p2}
\left(\int_{B_{R_\e}}(|\nabla v_1^t|+|\nabla v_2^t|)^p\right)^{1-\frac p2}\\\nonumber
&\le&\left(\frac{I}M\right)^{\frac p2} (2K)^{1-\frac p2}.
\end{eqnarray}
Therefore, there is a tame constant $M_0$ such that for any $p>1$ we have 
\begin{equation*}
\int_{B_{R_\e}}|\nabla v_1^t-\nabla v_2^t|^{p}\le M_0\left[\sup_ {D_1^t}(1-v_2^t)\int_{B_{R_\e}\setminus D_1^t}|\nabla v_1^t|^{p}
+\sup_ {D_2^t}(1-v_1^t) \int_{B_{R_\e}\setminus D_2^t}|\nabla v_2^t|^{p}
\right]^{\min(1, \frac p2)}.
\end{equation*}
Letting $t\to 0$ we get 
\begin{eqnarray}\label{sstt}
\int_{B_{R_\e}}|\nabla v_1-\nabla v_2|^{p}&\le& \liminf_{t\to 0} \int_{B_{R_\e}}|\nabla v_1^t-\nabla v_2^t|^{p}\\\nonumber
&\le& M_1\liminf_{t\to 0}\left[\sup_ {D_1^t}(1-v_2^t)
+\sup_ {D_2^t}(1-v_1^t) \right]^{\min(1, \frac p2)}.
\end{eqnarray}
with some tame constant $M_1$. 

%%%% 
%%%%

Since $1-v_i^t$ are nonnegative $p-$subsolutions in $B_{R_\e}$,  from the weak maximum 
principle, Theorem 3.9 \cite{MZ} we obtain 
\begin{equation}\label{w-max}
\sup_{B_{\sigma r}(z)}(1-v_i^t)\leq \frac C{(1-\sigma)^{n/p}}\left(\fint_{B_{r}(z)} (1-v_i^t)^p\right)^{\frac1p}.
\end{equation}

Take a finite covering of $D^t_i$ with balls $B_r(z_k^i), z_k^i\in S_i, r=3a_\e, k=1, \dots, N$. 
Choose $t$ small enough such that $D^t_j\subset \bigcup_{k=1}^N B_r(z_k^i)$ and 
applying \eqref{w-max} we obtain for $ i,j\in\{1, 2\}$ with $i\not=j$
\begin{eqnarray*}
\sup_ {D_j^t}(1-v_i^t)\leq \max_k\sup_{B_{ r}(z_k^i)}(1-v_i^t)\leq  C\max_k\left(\fint_{B_{2r}(z_k^i)} (1-v_i^t)^p\right)^{\frac1p}.
\end{eqnarray*}

Since  $\|v_i^t\|_{W^{1,p}(B_3)}\le C$ uniformly 
for all $t>0$ it follows that  $v_i^t\to v_1$ strongly in 
$L^p(B_3)$ and $v_i$ is  quasi-continuous. In other words, for any positive number $\theta$ there is a set $E_{\theta}$ such that 
$\cp_p E_\theta<\theta$ and $v_i$ is  continuous in $B_2\setminus E_\theta$.  Notice that 
$E_\theta\subset S_1\cup S_2$ and hence $\mathcal H^d(E_\theta)=0$.

This yields
\begin{eqnarray}
\lim_{t\to 0} \fint_{B_{r}(z_k^i)} (1-v_i^t)^p=\fint_{B_{r}(z_k^i)} (1-v_i)^p&=&\fint_{B_{2r}(z_k^i)\cap E_\theta} (1-v_i)^p+\fint_{B_{2r}(z_k^i)\setminus E_\theta} (1-v_i)^p\\\nonumber
&=&\fint_{B_{2r}(z_k^i)\setminus E_\theta} (1-v_i)^p\\\nonumber
&\le& C[\omega_{i}(6a_\e)]^p%+\alpha_i(t)
\end{eqnarray}
where $\omega_i(\cdot)$ is the modulus of continuity of $v_i$ on $B_{3}$ modulo the set $E_\theta$.
Thus 
$$\int_{B_{R_\e}}|\nabla v_1-\nabla v_2|^{p}\le C[\omega_{1}(6a_\e)+\omega_{2}(6a_\e)]^{p\min(1, \frac p2)}.$$
Hence \eqref{o-small0} is established.
Rescaling back and noting that $a_\e^{d-p}\omega_i(a_\e)=o(a_\e^{d-p})$ the result follows.
Observe that $L^p$ norm of $\nabla v_i^t$ remains uniformly bounded in $B_{R_\e}$ by \eqref{Cacc} 
and hence the moduli of quasi-continuity in, say,  $B_3$ do not depend on the particular choice of $\Gamma_\e^k$ or the 
tangent plane $P_{x}^k$.

\medskip 

${\bf 2^\circ}$
We recast the argument above but now for $S_1=\frac1{a_\e}P_1, S_2=\frac1{a_\e}P_{2}$.
Squaring the inequality $|\nu_1-\nu_2|\leq \delta$ we get that 
$2\sin\frac\beta2\le \delta$ where $\beta$ is the angle between $P_1$
and $P_2$. Since $\delta$  now measures the deviation of $v_1^t$ from 1 on $D_2^t$, (resp. $v_2^t$ on $D_1^t$)
we conclude that the corresponding moduli of continuity of the limits $v_1, v_2$ (as $t\to 0$) modulo a
set $E_\theta\subset S_1\cup S_2$ with small $p-$capacity depend on $\delta$, i.e. 
\begin{eqnarray}
\fint_{B_{r}(z_k^i)} (1-v_i)^p\le C[\omega_{i}(12\delta)]^p
\end{eqnarray}
where $B_r(z_k^i)$ provide a covering of $D_i^t$ as above but now, say,  $r=6\delta$.
Hence we can take $c_\delta=C(\omega_1(12\delta)+\omega_2(12\delta)).$
 \end{proof}

\medskip

\section{Proof of Theorem \ref{mainthm}}
We now formulate our result on the local approximation of total capacity (say in $Q'$) by tangent planes of $\Ga$
and prove \eqref{key-00}.

\begin{lemma}\label{lemma3}
Fix a cube $Q'\subset \R^{d-1}$ such that if $x=(x',x_d)$ and $y=(y',y_d)$ belong to $\Gamma$ and $x',y'\in Q'$, then the normals $\nu_x,\nu_y$ of $\Gamma$ at $x$ and $y$ satisfy 
$|\nu_x-\nu_y|\le \delta $.  
Then for any $x=(x',x_d)\in\Gamma$ with $x'\in Q'$, there holds 
\[
\lim_{\e\to0}\sum_{k\in \Z^n:k'\in \e^{-1}Q'}\int_{B_\e^k}|\nabla w_\e^k|^pdx = [\cp_{p,\nu_x}(T) + O(C_\delta)]\mathcal{H}^{d-1}(\Gamma_{Q'}), 
\]
where $\lim_{\delta\to0}C_\delta=0$ and $\Gamma_{Q'}=\{x\in\Gamma:x'\in Q'\}$.

\end{lemma}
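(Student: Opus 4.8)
# Proof Plan for Lemma 3

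\textbf{Overall strategy.} The plan is to reduce the cube-wide capacity sum to a sum over individual cells, replace each cellwise capacity $\cp_p(\Gamma_\e^k)$ by the tangent-plane capacity $\cp_p(P_{x_k}^k \cap \{a_\e T + \e k\})$ using Theorem 2 part $1^\circ$, then replace each tangent-plane capacity by a fixed reference capacity (the one at the chosen $x$) using Theorem 2 part $2^\circ$ and the $\delta$-smallness of normal variation on $Q'$, and finally count the nonempty cells via Theorem 1 and recognize the resulting Riemann sum as the surface integral $\cp_{p,\nu_x}(T)\,\mathcal H^{d-1}(\Gamma_{Q'})$.

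\textbf{Step-by-step.} First I would recall from the Correctors section that $\int_{B_\e^k}|\nabla w_\e^k|^p\,dx = \cp_p(\Gamma_\e^k) + o(a_\e^{d-p})$ and that $\cp_p(\Gamma_\e^k) = O(a_\e^{d-p})$, so the sum over the $A_\e \approx (a_\e/\e) N_\e$ nonempty cells is $O\big((a_\e/\e) N_\e a_\e^{d-p}\big) = O(1)$, confirming the sum is finite and making the $o(a_\e^{d-p})$ errors summable to $o(1)$. Second, for each nonempty cell pick $x_k \in \Gamma_\e^k$ and apply \eqref{star1} to get $\cp_p(\Gamma_\e^k) = \cp_p(P_{x_k}^k \cap \{a_\e T + \e k\}) + o(a_\e^{d-p})$; summing, the total equals $\sum_k \cp_p(P_{x_k}^k\cap\{a_\e T + \e k\}) + o(1)$. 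Third, apply \eqref{star2} with $\nu_1 = \nu_{x_k}$, $\nu_2 = \nu_x$ (both admissible since $|\nu_{x_k}-\nu_x|\le\delta$ on $Q'$ by hypothesis): $\cp_p(P_{x_k}^k\cap\{a_\e T+\e k\}) = \cp_p(P_{\nu_x, \e k}\cap\{a_\e T + \e k\}) + c_\delta\, a_\e^{d-p}\,\theta_k$ with $|\theta_k|\le 1$, so summing over the $A_\e$ cells contributes an error $\le c_\delta a_\e^{d-p} A_\e = O(c_\delta)$. Fourth, by scaling $\cp_p(P_{\nu_x,\e k}\cap\{a_\e T + \e k\}) = a_\e^{d-p}\cp_p\big(\tfrac{1}{a_\e}(P_{\nu_x} + \e k - \text{proj})\cap T\big)$, and here I would invoke the structure established around \eqref{eq15}: the nonempty cells are exactly those with $\e^{-1}g(\e k') \in I_\e \pmod 1$, and for such cells the plane $P_{\nu_x}$ (normalized) slices $T$ at a height $t_k = t_k(\e)$ that, as $k'$ ranges over the nonempty cells, becomes equidistributed in the relevant range. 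Thus $a_\e^{-(d-p)}\sum_k \cp_p(P_{\nu_x,\e k}\cap\{a_\e T + \e k\})$ is a Riemann-type sum that converges, cell-by-cell over the $\e$-grid on $Q'$ weighted by the fraction $A_\e/N_\e \approx a_\e/\e$, to $\int_{Q'}\big(\int_{-\infty}^\infty \cp_p(T\cap\{P_{\nu_x}+t\nu_x\})\,dt\big) \sqrt{1+|\nabla g|^2}\,dx' = \cp_{p,\nu_x}(T)\,\mathcal H^{d-1}(\Gamma_{Q'})$, using Theorem 1 to control the discrepancy of the count and the continuity assumption $(A_2)$ to pass the $t$-slicing to the limit. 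Collecting the three error terms ($o(1)$, $o(1)$, $O(c_\delta)$) and setting $C_\delta = c_\delta$ gives the claim.

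\textbf{Main obstacle.} The delicate point is the fourth step: turning the cell sum of sliced capacities into the double integral \eqref{meancap}. One must check that as $\e \to 0$ the "heights" at which $P_{\nu_x}$ cuts the rescaled holes — governed by the fractional part $\{\e^{-1}g(\e k')\}$ relative to the window $I_\e$ of width $O(a_\e/\e)$ — sweep out the full profile $t \mapsto \cp_p(T\cap\{P_{\nu_x}+t\nu_x\})$ uniformly, and that the discrepancy bound $O(\e^{1/3})$ from Theorem 1 is small compared to $|I_\e| = O(a_\e/\e)$, which is precisely assumption $(A_4)$ as noted after \eqref{eq15}. A secondary technical nuisance is justifying that the tangent plane to $\Gamma$ at $x_k$ and the fixed plane through $\e k$ with normal $\nu_x$ differ, after rescaling by $1/a_\e$, by a translation and rotation small enough that \eqref{star2} applies with the uniform $c_\delta$ — this uses the $C^2$ regularity of $\Gamma$ and the fact (remarked at the end of the proof of Theorem 2) that the moduli of quasi-continuity are independent of the particular cell. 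Once these are in hand, everything else is bookkeeping of summable error terms.
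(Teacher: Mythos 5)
Your proposal follows the paper's proof essentially step for step: Theorem \ref{thm2} $\bf 1^\circ$ to pass from $\cp_p(\Gamma_\e^k)$ to the tangent-plane capacity, Theorem \ref{thm2} $\bf 2^\circ$ together with the $\delta$-hypothesis on $Q'$ to freeze the normal at $\nu_x$, and Theorem \ref{thm1} (the equidistribution of $\{\e^{-1}g(\e k')\}$ with discrepancy $O(\e^{1/3})\ll |I_\e|$, which is exactly the content of Lemma 4 of \cite{KS} that the paper cites for this step) to convert the remaining cell sum into $\cp_{p,\nu_x}(T)\,\mathcal H^{d-1}(\Gamma_{Q'})$. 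The one adjustment needed is the anchoring of your reference plane: \eqref{star2} compares two planes passing through a \emph{common} point of the hole, so the comparison plane with normal $\nu_x$ should be translated to contain $x^k=(\e k',g(\e k'))$ (the paper's $P_\e^k$), not the lattice point $\e k$; with that choice your ``translation and rotation'' nuisance disappears and only the rotation, controlled by $c_\delta$, remains.
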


\begin{proof}
Fix $x\in \Gamma_{Q'}$ and let $P$ be the plane $\{y:y\cdot \nu_x=0\}$, where $\nu_x$ is the normal of $\Gamma$ at $x$. 
Suppose $k=(k',k_d)\in \Z^d$, $\e k'\in Q'$ and let $P_{x^k}$ be the tangent plane to $\Gamma$ at $x^k=(\e k',g(\e k'))$. 
Then Theorem 2 % \ref{lemma1} 
$\bf 1^\circ$ tells us that 
\[\cp_p(\Gamma_\e^k)=\cp_p(P_{x^k}\cap T_\e^k)+o(a_\e^{d-p}).\] 
If we set $P^k_\e = P+(-\e k',g(\e k'))$, then $P^k_\e$ will intersect the point $(\e k',g(\e k'))$. By assumption, $|\nu_x-\nu_{x^k}|\le \delta$, so 
\[\cp_p(P^k_\e\cap T_\e^k)=\cp_p(P_{x^k}\cap T_\e^k)+O(c_\delta a_\e^{d-p}),\] by Theorem 2 $\bf 2^\circ$. This gives 
$\cp_p(\Gamma_\e^k)=\cp_p(P^k_\e\cap T_\e^k)+O(c_\delta a_\e^{d-p})$. 
Since, by Theorem 1, the sequence $\{\e^{-1}g(\e k')\}_{k'\in\e^{-1}Q'}$ is uniformly distributed mod $1$ with discrepancy of order $\e^{1/3}$, the rescaled planes $\e^{-1}P^k_\e$ have the same distribution mod $1$, i.e.\ they are translates of $P$ and the translates have the same distribution. 
Using the proof of Lemma 4 of \cite{KS}, we conclude that 
\[
\lim_{\e\to0}\sum_{k\in \Z^n:k'\in \e^{-1}Q'}\cp_p(\{P_\e^k \}\cap T_\e^k) = \cp_{p,\nu_x}(T)\mathcal{H}^{d-1}(P_{Q'}), 
\]
where $P_{Q'}= \{x\in P:x'\in Q'\}$. 
Since we know that $\int_{B_\e^k}|\nabla w_\e^k|^pdx= \cp_p(\Gamma_\e^k)+o(a_\e^{d-p})$, the result follows from the fact that 
$\mathcal{H}^{d-1}(\Gamma_{Q'})=(1+O(c_\delta))\mathcal{H}^{d-1}(P_{Q'})$. 
\end{proof}

\medskip 

\begin{lemma}\label{cubelemma}
\[\lim_{\e\to0}\int_Q|\nabla w_\e|^pdx = \int_{\Gamma\cap Q} \cp_{p,\nu_x}(T)d\mathcal{H}^{d-1}.\]
\end{lemma}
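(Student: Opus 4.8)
The plan is to derive Lemma~\ref{cubelemma} from Lemma~\ref{lemma3} by a covering-and-summation argument, exploiting the arbitrariness of the modulus of continuity parameter $\delta$. First I would fix an arbitrary cube $Q\subset\R^d$ and, since the integrand is supported (in the limit) near $\Gamma$, reduce to estimating the contribution of those $\e$-cells $B_\e^k$ that meet $\Gamma$; these are indexed by $k$ with $\e k'$ ranging over the projection $Q'$ of $Q\cap\Gamma$ onto $\R^{d-1}$, so $\int_Q|\nabla w_\e|^p\,dx = \sum_{k:\,\e k'\in Q'}\int_{B_\e^k}|\nabla w_\e^k|^p\,dx + o(1)$, the error coming from the cells near $\partial Q$ whose number is of lower order.

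Next, given $\eta>0$, I would use the uniform continuity of $x\mapsto\nu_x$ on the compact piece $\Gamma\cap\bar Q$ (which holds since $\Gamma$ is $C^2$) to partition $Q'$ into finitely many subcubes $Q'_1,\dots,Q'_m$, each small enough that the normals of $\Gamma$ over $Q'_j$ vary by at most $\delta=\delta(\eta)$, with $C_\delta\le\eta$ in the notation of Lemma~\ref{lemma3}. On each $Q'_j$ I apply Lemma~\ref{lemma3}, picking a reference point $x_j\in\Gamma$ with $x_j'\in Q'_j$, to get
\[
\lim_{\e\to0}\sum_{k:\,\e k'\in Q'_j}\int_{B_\e^k}|\nabla w_\e^k|^p\,dx = \bigl[\cp_{p,\nu_{x_j}}(T)+O(C_\delta)\bigr]\,\mathcal H^{d-1}(\Gamma_{Q'_j}).
\]
Summing over $j$ and using that $\sum_j\mathcal H^{d-1}(\Gamma_{Q'_j})=\mathcal H^{d-1}(\Gamma\cap Q)$ is finite, the total error is $O(\eta)\,\mathcal H^{d-1}(\Gamma\cap Q)$. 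Meanwhile, by continuity of $x\mapsto\cp_{p,\nu_x}(T)$ (which follows from assumption $(A_2)$ and Theorem~\ref{thm2}~$\bf 2^\circ$) and the smallness of the subcubes, $\sum_j\cp_{p,\nu_{x_j}}(T)\,\mathcal H^{d-1}(\Gamma_{Q'_j})$ is a Riemann sum for $\int_{\Gamma\cap Q}\cp_{p,\nu_x}(T)\,d\mathcal H^{d-1}$, so it differs from that integral by another $O(\eta)$ (refining the partition if necessary).

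Combining these, $\limsup_{\e\to0}\bigl|\int_Q|\nabla w_\e|^p\,dx - \int_{\Gamma\cap Q}\cp_{p,\nu_x}(T)\,d\mathcal H^{d-1}\bigr|\le C\eta$ for a constant independent of $\eta$; letting $\eta\to0$ gives the claim. The existence of the limit itself (not just its value) is handled by the same two-sided bound. The main obstacle I anticipate is the interchange of the finite sum over subcubes with the $\e\to0$ limit: Lemma~\ref{lemma3} is applied to finitely many $Q'_j$ at fixed $\delta$, so this is legitimate, but one must be careful that the $o(a_\e^{d-p})$ and $O(c_\delta a_\e^{d-p})$ error terms per cell, when summed against the $O((\e^{-1}\mathrm{diam}\,Q')^{d-1})$ cells in a subcube, indeed produce a genuine $o(1)$ plus $O(C_\delta)$ — this is exactly the bookkeeping already carried out in \eqref{eq15} and the surrounding estimates, so I would invoke it rather than redo it. A minor secondary point is confirming that the boundary-layer cells (those $B_\e^k$ straddling $\partial Q$) contribute negligibly, which follows because their number is $O(\e^{-(d-2)})$ while each contributes $O(a_\e^{d-p})$ after the uniform bound on $\int_{B_\e^k}|\nabla w_\e^k|^p$.
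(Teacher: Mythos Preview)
Your proposal is correct and follows essentially the same approach as the paper: partition the projection $Q'$ into finitely many subcubes on which the normal varies by at most $\delta$, apply Lemma~\ref{lemma3} on each, sum, use continuity of $x\mapsto\cp_{p,\nu_x}(T)$ to pass from the Riemann sum to the integral, and send $\delta\to0$. Your treatment is in fact slightly more explicit than the paper's about the boundary cells and the per-cell error bookkeeping, but the skeleton of the argument is identical.
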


\begin{proof}
The claim follows by decomposing the set $\{x'\in\R^{d-1}:(x',g(x'))\in \Gamma\cap Q\}$ into disjoint cubes $\{Q_j'\}$ that satisfy the hypothesis of  Lemma \ref{lemma3}. Since $\Gamma$ is $C^2$, we can find a finite number of disjoint cubes $\{Q_j\}_{j=1}^{N(\delta)}$, such that $\mathcal{H}^{d-1}(\Gamma\cap Q\setminus \cup_j Q_j\cap\Gamma)=0$ and $Q_j'$ is as in Lemma \ref{lemma3}. For all $x\in\Gamma\cap Q_j$ we have $x=(x',g(x))$ for $x'\in Q_j'$, after interchanging coordinate axes if necessary.  
Thus 
\begin{align*}
&\lim_{\e\to0}\int_Q|\nabla w_\e|^pdx = \sum_j\lim_{\e\to0}\sum_{k\in \Z^n:k'\in \e^{-1}Q'_j}\int_{B_\e^k}|\nabla w_\e^k|^pdx\\
&=\sum_{x^j\in Q_j'}[\cp_{p,\nu_{x^j}}(T)+ O(C_\delta)]\mathcal{H}^{d-1}(\Gamma_{Q_j'}) \\
&=\int_{\Gamma\cap Q}\cp_{p,\nu(x)}(T)d\mathcal{H}^{d-1}+O(C_\delta), 
\end{align*}
where in the last step we used that $\cp_{p,\nu(x)}(T)=\cp_{p,\nu_{x^j}}(T)+O(C_\delta)$ for all $x\in \Gamma_{Q_j'}$, by Lemma \ref{lemma3}. 
Sending $\delta \to0$ proves the lemma. 
\end{proof}

Having established Lemma \ref{cubelemma}, the rest of the proof of $\bf{H_2}$ and $\bf{H_3}$ is carried out precisely as in \cite{KS}, with Lemma \ref{cubelemma} above replacing Lemma 4 in \cite{KS}. The proof of Theorem \ref{mainthm} from $\bf{H_1}-\bf{H_3}$ is given in section 4 of \cite{KS} when $\Gamma$ is a hyper plane, and remains the same for the present case when $\Gamma$ is a convex surface.

\section*{Acknowledgements }
Part of the work was carried out at the Mittag-Leffler Institute during the programme in homogenisation and randomness, Winter 2014.


\begin{thebibliography}{widest-label}

\bibitem{CaffM}
Caffarelli, L. A. and Mellet, A., Random homogenization of an obstacle problem, 
  % Ann. Inst. H. Poincar\'e Anal. Non Lin\'eaire, 
 Annales de l'Institut Henri Poincar\'e. Analyse Non Lin\'eaire, Vol. 26, 2009, 375--395
   

\bibitem{CM}
    D. Cioranescu, F. Murat, A strange term coming from nowhere, Topics in the mathematical modelling of composite materials, Progr. Nonlinear Differential Equations Appl., 1997, no. 31, 45-93

\bibitem{Frehse} J. Frehse, Capacity methods in the theory of partial differential equations.
Jahresbericht der Deutschen Math.-Ver., 84: pp. 1--44, 1982.

\bibitem{KS} A. Karakhanyan, M. Str\"omqvist, Application of uniform distribution to homogenization of a thin obstacle problem with p-Laplacian. Comm. Partial Differential Equations 39 (2014), no. 10, 1870--1897.


\bibitem{LSY} Ki-ahm Lee,
M. Str\"omqvist,
M. Yoo, Highly oscillating thin obstacles,
Advances in Mathematics,
Volume 237, 1 April 2013, pp. 286--315


\bibitem{MZ} J. Maly, W. P. Ziemer, Fine Regularity of Solutions of Elliptic Partial Differential Equations, Mathematical Surveys and Monographs, AMS 1997 


\bibitem{KN} Kuipers, L. and Niederreiter, H., Uniform distribution of sequences, Pure and Applied Mathematics, 1974


\end{thebibliography}
\end{document}